\newtheorem{theorem}{Theorem}[section]
\newtheorem{conjecture}{Conjecture}
\newtheorem{lemma}[theorem]{Lemma}
\newtheorem{corollary}[theorem]{Corollary}
\newtheorem{example}[theorem]{Example}
\theoremstyle{definition}
\newcommand{\ann}{\mbox{\rm ann}}
\newcommand{\End}{\mbox{\rm End}}
\newcommand{\degmin}{\mbox{\rm degmin}}
\newcommand{\N}{\mathbb N}
\newcommand{\Z}{\mathbb{Z}}
\begin{document}
    \title[left algebraic subgroups]{A note On subgroups in a division ring that are left algebraic over a division subring}
    \author[B. X. Hai]{Bui Xuan Hai}\thanks{The first and the third authors were funded by  Vietnam National University HoChiMinh City (VNUHCM) under grant number C2018-18-03.}
    \author[V. M. Trang]{Vu Mai Trang} \author[M. H. Bien]{Mai Hoang Bien}
   \email{bxhai@hcmus.edu.vn;trangvm8234@gmail.com; mhbien@hcmus.edu.vn}
\address{Faculty of Mathematics and Computer Science, VNUHCM-University of Science, 227 Nguyen Van Cu Str., Dist. 5, Ho Chi Minh City, Vietnam.}

\keywords{right (left) algebraic, derived subgroups, normal subgroups  \\
\protect \indent 2010 {\it Mathematics Subject Classification.} 16K20, 16K40, 16R20.}

 \maketitle

 \begin{abstract}  Let $D$ be a division ring with center $F$ and $K$ a division subring of $D$. In this paper, we show that a non-central normal subgroup $N$ of the multiplicative group $D^*$ is left algebraic over $K$ if and only if so is $D$ provided $F$ is uncountable and contained in $K$. Also, if $K$ is a field and the $n$-th derived subgroup $D^{(n)}$ of $D^{*}$ is left algebraic of bounded degree $d$ over $K$, then $\dim_FD\le d^2$.
\end{abstract}
\section{Introduction}        

Let $D$ be a division ring with center $F$ and $K$ a division subring of $D$. Recall that an element $a\in D$ is \textit{left algebraic} over $K$ if there exist $a_0,a_1,\cdots,a_n\in K$ not all are zeros such that $a_0+a_1a+\cdots+a_na^n=0$, or equivalently, there exists a non-zero polynomial $f(t)\in K[t]$ whose coefficients are written on the left such that $f(a)=0$. Here, we have to emphasize the convention that for polynomials over a  ring $R$ (commutative or not), a polynomial $f(t)\in R[t]$ can be written in two ways such as
$$f(t)=\sum_{\text{finite}}a_it^i=\sum_{\text{finite}}t^ia_i.$$
However, if $a\in S$, where $S$ is a ring containing $R$, the substitution functions may give the different values, i.e., we may have
$$\sum_{\text{finite}}a_ia^i\neq\sum_{\text{finite}}a^ia_i.$$ 
In this paper, we always mean $$f(a)=\sum_{\text{finite}}a_ia^i,$$
and then $a$ is called a \textit{right root} of $f(t)$. A \textit{left root} of $f(t)$ and a \textit{right algebraic element} are defined similarly. A subset $S$ is called \textit{left algebraic} (resp., \textit{right algebraic}) over $K$ if every element in $S$ is left algebraic (resp., right algebraic) over $K$. If $K$ is central, that is, $K\subseteq F$, then the  right algebraicity coincides with the left one.  However, if $K$ is not central, then there are division rings $K\subseteq D$ such that $D$ is left algebraic but not right algebraic over $K$. Observe that with division rings $K\subseteq D$ which were presented by Cohn in \cite{Pa_Co_61}, we can show that the division ring $D$ is left algebraic but not right algebraic over $K$. In Section~\ref{s2}, we give a natural and simple example of division rings $K\subseteq D$ such that $D^*$ contains a proper normal subgroup $N$ which is left algebraic but not right algebraic over $K$. In the case when $K$ is a subfield of $D$, it is not known whether $D$ is left algebraic over $K$ provided it is  right algebraic over $K$ (see \cite[Page 1610]{Bell}).

The notion of one-sided (right or left) algebraicity has been introduced to study polynomials over division rings. For example, \cite{Bo_Ja_64} (see Chapter 7) is one of the oldest books mentioning this notion. Some special cases of one-sided algebraicity were studied by several authors. For instance, C. Faith \cite{Pa_Fa_60} investigated division rings that are radical over their division subrings, P. M. Cohn and A. H. Schofield \cite{Pa_Co_61,Pa_Sc_85} studied the left and right dimensions of a division ring $D$ over its division subring $K$. Recently, division rings whose elements are algebraic (left or right) over their division subrings have been received a considerable attention (see, for example \cite{Pa_AaAkBi_17,Pa_BeRo_14,Bell,Pa_DeBiHa_18, Pa_Ha_02} and references therein). 

In this note, firstly, we are interested in the question whether the algebraicity of some subset in a division ring $D$ over its proper division subring $K$ may entail the algebraicity of whole $D$. More exactly, we pose the following conjecture to study.
\begin{conjecture}
	Let $D$ be a division ring with center $F$, $K$ a division subring of $D$ containing $F$, and $N$ a subnormal subgroup of $D^*$. If $N$ is non-central, then $N$ is left algebraic (resp., right algebraic) over $K$ if and only if so is $D$.
\end{conjecture}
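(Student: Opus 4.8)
The direction ``$D$ left algebraic over $K$ $\Rightarrow$ $N$ left algebraic over $K$'' is immediate, since $N\subseteq D$, so I would concentrate on the converse, and only on the left-algebraic case. The right-algebraic statement for the pair $(D,K)$ is literally the left-algebraic statement for the opposite division ring $D^{\mathrm{op}}$ and its subdivision ring $K^{\mathrm{op}}$: the center is still $F$, the subset $N$ is again a non-central subnormal subgroup (the subgroup lattice, normality, and subnormality are preserved under the anti-automorphism $g\mapsto g^{-1}$), and $F\subseteq K^{\mathrm{op}}$ still holds. Passing to $D^{\mathrm{op}}$ interchanges left and right roots, so it suffices to prove: if $N$ is non-central subnormal and left algebraic over $K\supseteq F$, then $D$ is left algebraic over $K$.

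The first genuine step is to convert ``subnormal and non-central'' into a usable generation statement. By the structural results of Stuth on subnormal subgroups (a subnormal analogue of Cartan--Brauer--Hua), a non-central subnormal subgroup $N$ of $D^*$ satisfies $C_D(N)=F$ and generates $D$ as a division ring, i.e.\ the smallest division subring containing $F\cup N$ is $D$ itself. In the purely normal case this is Cartan--Brauer--Hua directly: the division subring $\langle F\cup N\rangle$ is carried into itself by every inner automorphism (since conjugation fixes $F$ and preserves $N\triangleleft D^*$), so it equals $F$ or $D$, and $N\not\subseteq F$ forces it to be $D$. This absorbs the entire subnormality hypothesis into one fact and frees me from descending a subnormal chain by hand; the problem becomes: a multiplicative generating set $N$ for the division ring $D$ consists of elements left algebraic over $K$ --- why must every element of $D$ be left algebraic over $K$?

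I would attack this in two regimes. If the elements of $N$ are left algebraic of uniformly bounded degree $d$ over $K$, I would try to adapt the bounded-degree theorem stated in the abstract, whose conclusion $\dim_F D\le d^2$ gives finite-dimensionality of $D$ over $F$ and hence algebraicity over $F\subseteq K$; that theorem is proved for $K$ a field by a polynomial-identity argument (Kaplansky), and already extending it to a general division subring $K$ is nontrivial. In the unbounded-degree regime the natural tool is Amitsur's cardinality lemma: in a division ring with center $F$, if $\dim_F D<\card(F)$ then every element is algebraic over $F$, because a transcendental $x$ makes $\{(x-\lambda)^{-1}:\lambda\in F\}$ an $F$-independent family of size $\card(F)$. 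This is what one would exploit when $F$ is uncountable, reducing the goal to a dimension bound and thereby explaining the uncountability hypothesis in the abstract's first theorem.

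The step I expect to be the main obstacle --- and the reason the statement is posed as a conjecture rather than proved --- is precisely the passage from algebraicity of the generators to algebraicity of $D$ when $K$ is noncommutative and neither a degree bound nor uncountability of $F$ is available. Two structural defects block the clean arguments. First, left algebraicity over a noncommutative $K$ is not stable under the operations that build $D$ from $N$: a sum or product of right roots of left-coefficient polynomials need not be a right root of any such polynomial, and conjugation sends a relation $\sum a_i x^i=0$ to $\sum (ta_it^{-1})(txt^{-1})^i=0$ with coefficients in $tKt^{-1}\ne K$, destroying the normality-based spreading of relations that works smoothly only when $K\subseteq F$. Second, Amitsur's lemma needs $\dim_F D<\card(F)$, yet the division subring generated by even finitely many elements can have $F$-dimension as large as $\card(F)$ once inverses are adjoined, so uncountability of $F$ does not by itself deliver the needed strict inequality. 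My plan is therefore to secure rigorously the two tractable pillars --- the field-$K$ case (where conjugation preserves $K$ and the normality arguments close up) and the bounded-degree case --- and then to attack the remaining noncommutative-$K$, unbounded-degree case by seeking a subadditive ``left-degree'' estimate along the division-ring operations generated by $N$, which would replace the missing subring closure; I expect this last ingredient to be the crux.
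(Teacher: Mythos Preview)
The statement you are addressing is posed in the paper as a \emph{conjecture}; the paper does not prove it in full. What the paper does prove is the special case where $N$ is \emph{normal} (not merely subnormal) and the center $F$ is \emph{uncountable}. So your proposal should be compared against that partial proof, and here the approaches diverge substantially.

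Your plan is global: use Cartan--Brauer--Hua/Stuth to conclude that $N$ generates $D$ as a division ring, and then try to push left $K$-algebraicity through the division-ring operations. You correctly flag this propagation step as the crux and as essentially open. The paper bypasses it entirely with an element-by-element argument. Given $a\in D\setminus F$, pick $b\in N$ with $d:=ba-ab\ne 0$ (possible since $C_D(N)=F$). For every $\alpha\in F$ the commutator $c:=(a+\alpha)^{-1}b^{-1}(a+\alpha)b$ lies in $N$, hence is left algebraic over $K$; the identity $d=b(a+\alpha)(1+c)$ shows $d^{-1}b(a+\alpha)=(1+c)^{-1}$ is left algebraic over $K$. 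A left polynomial relation for $(1+c)^{-1}$ can be rewritten so that $(a+\alpha)^{-1}$ is a left $K$-combination of words in $a,b,d$. Thus the uncountable family $\{(a+\alpha)^{-1}:\alpha\in F\}$ sits inside the left $K$-span of the \emph{countable} group $\langle a,b,d\rangle$, hence is left $K$-dependent. A $K$-linear Amitsur-type lemma (if $a$ is not left algebraic over $K$ and $\alpha_1,\dots,\alpha_n\in F$ are distinct, then $\{(a-\alpha_i)^{-1}\}$ is left $K$-independent) then forces $a$ to be left algebraic over $K$.

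Two specific places where your outline underperforms what the paper actually achieves. First, you invoke Amitsur's lemma only in its $F$-form ($\dim_F D<\card(F)\Rightarrow$ algebraic over $F$); the paper needs, states, and proves the $K$-linear version above, which yields left algebraicity over $K$ directly and works for noncommutative $K\supseteq F$. Second, your worry that the division subring generated by finitely many elements may have $F$-dimension $\ge\card(F)$ is precisely what the paper sidesteps: it never forms a division subring, only the $K$-span of a finitely generated \emph{group}, whose $K$-dimension is at most countable. That is exactly why uncountability of $F$ suffices, and why the argument goes through for arbitrary division subrings $K\supseteq F$, not just for subfields as you suggest. Your generation-based program may still be a reasonable long-range attack on the full conjecture, but for the case the paper settles it is neither the route taken nor needed.
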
\label{conj:1}
In Section~\ref{s3} (see Theorem~\ref{t2.3}), we give the answer to this conjecture in the case when $N$ is a normal subgroup of $D^*$ and $F$ is uncountable. 

Secondly, in Section~\ref{s4}, we study a division ring $D$ whose $n$-th derived subgroup $D^{(n)}$ of $D^*$ is left algebraic of bounded degree over some subfield  (recall that the $n$-th derived subgroup $D^{(n)}$ of $D^*$ is defined as follow: $D^{(1)}=D'$ is the commutator subgroup of $D^*$ and $D^{(n)}$ is the commutator subgroup of $D^{(n-1)}$ for $n>1$). 
Note that in \cite{Bell}, it was proved that if $D$ is a division ring with center $F$ and there exists a subfield $K$ of $D$ such that $D$ is left algebraic over $K$ of bounded degree $d$, then $\dim_FD\le d^2$. This result was extended for the commutator subgroup $D'$ instead of $D$ in \cite[Theorem 17]{Pa_AaAkBi_17}. The result we get in Theorem~\ref{t3.3} generalizes this fact by considering $D^{(n)}$ for an arbitrary $n\ge 1$ instead of $D'$.

\section{Example}\label{s2} In this section, we give an example of division rings $K\subseteq D$ such that $D^*$ contains a proper normal subgroup $N$ which is left algebraic but not right algebraic over $K$.

Let $F$ be a field with an endomorphism $\sigma$ and $t$ an indeterminate. We denote by $F((t,\sigma))=\{\sum\limits_{i=n}^\infty a_it^i\mid a_i\in F, n\in \Z \}$ the ring of Laurent skew series in $t$ over $F$ with respect to $\sigma$ in which the addition is defined as usual, and the multiplication is an extension of the rule $ta=\sigma (a)t$. In general, $F((t,\sigma))$ is not a division ring (for example, if $\sigma$ is not injective, then there exists $a\in F^*$ such that $\sigma(a)=0$, so $ta=\sigma(a)t=0$). However, if $\sigma$ is injective, then $F((t,\sigma))$ is a division ring \cite[Example 1.7]{Bo_La_99}. For $\alpha=\sum_{i=n}^\infty a_it^i$, the lowest power appearing in $\alpha$ is denoted by $\degmin(\alpha)$. That is, $\degmin(\alpha)=\min\{i\mid a_i\ne 0\}.$

Now let $k$ be a field of characteristic $0$ and $\{x_0, x_1,\cdots\}$ a countable set of commuting indeterminates. Consider the field of fractions  $F=k(x_0,x_1,\dots)$ of the polynomial ring $k[x_0,x_1,\dots]$, and the endomorphism $\sigma : F\to F$  defined by $\sigma(x_{i})=x_{i+1}$ for $i\in\N$. Then, we have the following easy lemma.

\begin{lemma}\label{ml2.1} Let $F=k(x_0,x_1,\dots)$ and $\sigma : F\to F$ be defined above. Then $D=F((t,\sigma))$ is a division ring and $$\degmin(\alpha.\beta)=\degmin(\alpha)+\degmin(\beta)$$  for every $\alpha,\beta\in D$. In particular,  $\degmin(\alpha^{-1})=-\degmin(\alpha)$ for  every $\alpha\in D^*$.
\end{lemma}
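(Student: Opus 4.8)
The plan is to reduce everything to the injectivity of $\sigma$. First I would verify that $\sigma$ is injective: restricted to the polynomial ring $k[x_0,x_1,\dots]$ the assignment $x_i\mapsto x_{i+1}$ is the obvious shift, which is injective (it is an isomorphism onto the subring $k[x_1,x_2,\dots]$), and since $\sigma$ is a nonzero ring endomorphism of the \emph{field} $F=k(x_0,x_1,\dots)$ it is automatically injective on all of $F$. With $\sigma$ injective, \cite[Example 1.7]{Bo_La_99} gives at once that $D=F((t,\sigma))$ is a division ring, so the real content of the lemma is the behaviour of $\degmin$; here the particular shape of $F$ and $\sigma$ plays no role beyond injectivity.

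For the multiplicativity of $\degmin$, I would write $\alpha=\sum_{i\ge m}a_it^i$ with $a_m\ne 0$ and $\beta=\sum_{j\ge n}b_jt^j$ with $b_n\ne 0$, so that $\degmin(\alpha)=m$ and $\degmin(\beta)=n$. Iterating the defining rule $ta=\sigma(a)t$ to $t^ia=\sigma^i(a)t^i$ and collecting terms of equal degree (a legitimate rearrangement, since for each fixed $k$ only finitely many pairs $(i,j)$ with $i+j=k$, $i\ge m$, $j\ge n$ contribute), one obtains
$$\alpha\beta=\sum_{k\ge m+n}\Bigl(\sum_{\substack{i+j=k\\ i\ge m,\,j\ge n}}a_i\sigma^i(b_j)\Bigr)t^{k}.$$
Every exponent occurring is $\ge m+n$, and the coefficient of $t^{m+n}$ is precisely $a_m\sigma^m(b_n)$. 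Since $\sigma$ is injective so is $\sigma^m$, whence $\sigma^m(b_n)\ne 0$; and $F$, being a field, is in particular a domain, so $a_m\sigma^m(b_n)\ne 0$. Therefore $\degmin(\alpha\beta)=m+n=\degmin(\alpha)+\degmin(\beta)$.

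Finally, for $\alpha\in D^*$ I would apply this identity to the product $\alpha\alpha^{-1}=1$: since $\degmin(1)=0$, it follows that $\degmin(\alpha)+\degmin(\alpha^{-1})=0$, i.e. $\degmin(\alpha^{-1})=-\degmin(\alpha)$. I do not expect a genuine obstacle in this lemma; the only two points deserving a word of justification are that $\sigma$ really does extend to a well-defined injective endomorphism of the fraction field $F$ (clear, being a nonzero field endomorphism), and that the skew-multiplication rule propagates to $t^ib=\sigma^i(b)t^i$ so that the coefficient bookkeeping above is valid — both entirely routine.
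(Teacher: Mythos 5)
Your proposal is correct and follows the same route as the paper, which simply cites \cite[Example 1.7 and Proposition 14.2]{Bo_La_99} for the division-ring claim and notes that the $\degmin$ identity is "elementary by the fact that $\sigma$ is injective"; you have just written out the details (lowest coefficient of $\alpha\beta$ equals $a_m\sigma^m(b_n)\ne 0$, then apply the product formula to $\alpha\alpha^{-1}=1$) that the authors leave implicit. No gaps.
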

\begin{proof}
	The proof of the first conclusion is essentially due to that of \cite[Example 1.7 and Proposition 14.2]{Bo_La_99}. The proof of the second one is elementary by the fact that $\sigma$ is injective.
\end{proof}
 
Now,  we are ready to give an example we have mentioned in the begining of this section.
\begin{example}{\rm Let $D=F((t,\sigma))$ be as in Lemma~\ref{ml2.1} and consider the following subset in  $D$:
$$K=F((t^2,\sigma))=\Big\{ \sum\limits_{i=n}^\infty a_{i}t^{2i}\mid n\in \Z, a_i\in F \Big\}.$$
It is easy to see that $K$ is a division subring of $D$. It is obvious that if $\alpha=\sum\limits_{i=n}^\infty a_it^i\in D$, then $$\alpha=\sum\limits_{i =2j\ge n} a_it^i+\sum\limits_{i=2j+1\ge n} a_it^i,$$ that is, $\alpha =\alpha_1+\alpha_2 t$, where $\alpha_1,\alpha_2\in K$. Hence, $\{1,t\}$ is a basis of the left vector space $D$ over $K$, which implies that the dimension  of the left vector space $D$ over $K$ is $2$. Hence, every element of $D$ is left algebraic of degree $\le 2$ over $K$.
Now, let $N=\{\alpha\in D^*\mid \degmin(\alpha)=0\}$. We claim that $N$ is a proper normal subgroup of $D^*$. 
Indeed, it is trivial that $N\ne D^*$. For $\alpha,\beta\in N$, the condition  $\degmin(\alpha)=\degmin(\beta)=0$ implies $\degmin(\alpha\beta)=\degmin(\alpha)+\degmin(\beta)=0$ and $\degmin(\alpha^{-1})=-\degmin(\alpha)=0$. Therefore, $\alpha\beta, \alpha^{-1}\in N$, which shows that $N$ is subgroup of $D^*$. Assume that $\alpha\in N$ and $\beta\in D^*$. Then,

$\degmin(\beta^{-1}\alpha\beta)=\degmin(\beta^{-1})+\degmin(\alpha)+\degmin(\beta)$

\hspace*{2.51cm}$=-\degmin(\beta)+\degmin(\beta)=0.$

As a corollary, $\beta^{-1}\alpha\beta\in N$, so $N$ is normal in $D^*$.
		
Consider the element $x_0+t\in N$. To finish example, we will show that $x_0+t$ is not right algebraic over $K$. Suppose that there exist $h_0(t^2), h_1(t^2),\cdots, h_n(t^2)\in K$ such that $h_n(t^2)\ne 0$ and $$h_0(t^2)+(x_0+t)h_1(t^2)+\cdots + (x_0+t)^n h_n(t^2)=0.$$ We seek a contradiction. Indeed, observe that, after expanding, $(x_0+t)^i$ is written as a linear sum of term $x_0^{m_0}x_{1}^{m_1}\cdots x_{i-1}^{m_{i-1}}t^{m}$ over $\Z$, so one sees that the term  $x_0^{n-1}t$ appears in $(x_0+t)^n$ but does not in $(x_0+t)^i$ with $i<n$. Moreover, all powers of $t$ appearing in $h_0(t^2),\dots, h_{n-1}(t^2)$ is even, so $x_0^{n-1}th_n(t^2)=0$, equivalently, $h_n(t^2)=0$, a contradiction. Thus, $x_0+t$ is not right algebraic over $K$. }
\end{example}

\section{Left algebraic normal subgroups in a division ring}\label{s3}

For a division ring $D$ and its division subring $K$,  ${}_KD$ and $D_K$ denote the left  and right vector space over $K$ respectively. In this section, we give the affirmative answer to Conjecture \ref{conj:1} in the case when $F$ is uncountable %which is contained in $K$ 
and $N$ is a non-central normal subgroup of $D^*$. 

We need some lemmas.

\begin{lemma}\label{l2.1}{\rm\cite[Page 440]{scott}}
	Let $D$ be a division ring with center $F$ and $N$ a normal subgroup of $D^*$. If $N$ is non-central, then $C_D(N)=F$.
\end{lemma}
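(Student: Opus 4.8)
The plan is to reduce the statement to the Cartan--Brauer--Hua theorem. Set $C = C_D(N)$. The first step is to record two easy structural facts: $C$ is a division subring of $D$ (it is the centralizer of a subset of $D$, hence a subring containing $1$, and if $0 \ne d$ centralizes $N$ then, multiplying the relation $dn = nd$ on both sides by $d^{-1}$, so does $d^{-1}$), and $F \subseteq C$ because $F$ is central.

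The key step is to show that $C$ is stable under every inner automorphism of $D^*$. Given $a \in D^*$, $d \in C$ and $n \in N$, normality of $N$ gives $ana^{-1} \in N$, hence $d(ana^{-1}) = (ana^{-1})d$; multiplying this identity on the left by $a^{-1}$ and on the right by $a$ and using $a^{-1}a = 1$ collapses it to $(a^{-1}da)n = n(a^{-1}da)$. As $n$ ranges over $N$ this shows $a^{-1}da \in C$, so $a^{-1}Ca \subseteq C$ for every $a \in D^*$; applying this also to $a^{-1}$ in place of $a$ gives $a^{-1}Ca = C$ for all $a \in D^*$.

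Now the Cartan--Brauer--Hua theorem says that a division subring of $D$ stable under all inner automorphisms of $D^*$ is either contained in the center $F$ or equal to $D$. The alternative $C = D$ would mean that $N$ is centralized by all of $D$, i.e. $N \subseteq F^*$, contradicting the hypothesis that $N$ is non-central; hence $C \subseteq F$, and together with $F \subseteq C$ this yields $C_D(N) = F$. I do not anticipate a genuine obstacle: the argument hinges entirely on correctly invoking Cartan--Brauer--Hua, and the only place requiring care is the conjugation bookkeeping in the key step, which crucially uses that $N$ is a subgroup (so that $ana^{-1}$ again lies in $N$) normal in $D^*$.
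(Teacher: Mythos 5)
Your proof is correct: the paper gives no proof of this lemma, citing Scott instead, and your reduction to the Cartan--Brauer--Hua theorem is the standard argument for exactly this statement. The conjugation computation showing $a^{-1}C_D(N)a = C_D(N)$ is carried out correctly, and the final dichotomy is applied properly, so there is nothing to add.
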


\begin{lemma} \label{l2.2}
	Let $D$ be a division ring with center $F, K$ a division subring of $D$ containing $F$ and $a$ an element of $D$. Assume that $\alpha_1,\alpha_2,\cdots,\alpha_n$ are distinct elements in $F$  such that all elements $a-\alpha_i$ are non-zeros. Then, either $a$ is left (resp. right) algebraic over $K$ or the set  $\{(a-\alpha_i)^{-1}\mid i=1,2,\cdots, n\}$ is left (resp. right) linearly independent over $K$.
\end{lemma}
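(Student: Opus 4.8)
The plan is to establish the contrapositive form of the assertion: if $a$ is \emph{not} left algebraic over $K$, then $(a-\alpha_1)^{-1},\dots,(a-\alpha_n)^{-1}$ are left linearly independent over $K$; the statement for right algebraicity together with right linear independence is obtained by the mirror-image argument. So I suppose $c_1,\dots,c_n\in K$ satisfy $\sum_{i=1}^{n}c_i(a-\alpha_i)^{-1}=0$ and aim to conclude that each $c_i=0$.

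The decisive point is that the $\alpha_i$ lie in the center $F$, so the elements $a-\alpha_1,\dots,a-\alpha_n$ commute pairwise and commute with every inverse $(a-\alpha_j)^{-1}$. Consequently I may multiply the relation on the right by $\prod_{j=1}^{n}(a-\alpha_j)$; in the $i$-th summand the factor $a-\alpha_i$ cancels $(a-\alpha_i)^{-1}$, and what remains is
$$\sum_{i=1}^{n}c_i\,p_i(a)=0,\qquad\text{where}\quad p_i(t):=\prod_{j\neq i}(t-\alpha_j)\in F[t].$$
Each $p_i$ is monic of degree $n-1$ with coefficients $b_{i0},\dots,b_{i,n-1}\in F\subseteq K$; since these coefficients are central, $\sum_i c_i p_i(a)=\sum_{k=0}^{n-1}d_k a^{k}$ with $d_k:=\sum_{i}b_{ik}c_i\in K$. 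Thus $a$ is a right root of $g(t):=\sum_k d_k t^k\in K[t]$, a polynomial of degree $<n$. If $g\neq 0$ this would make $a$ left algebraic over $K$, contrary to hypothesis; hence $g=0$, i.e. $d_k=0$ for all $k$.

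It remains to deduce $c_i=0$ from the equations $\sum_i b_{ik}c_i=0$. Because the $\alpha_i$ are distinct we have $p_i(\alpha_\ell)=0$ for $\ell\neq i$ and $p_i(\alpha_i)=\prod_{j\neq i}(\alpha_i-\alpha_j)\neq 0$, so $\{p_1,\dots,p_n\}$ is an $F$-basis of the space of polynomials of degree $<n$ (Lagrange interpolation); equivalently, the $n\times n$ coefficient matrix $B=(b_{ik})$ is invertible over $F$ (it equals a product of a nonsingular diagonal matrix with the inverse of a Vandermonde matrix in the $\alpha_\ell$). The relations $d_k=0$ say precisely that $(c_1,\dots,c_n)B=0$, and multiplying on the right by $B^{-1}\in\GL_n(F)\subseteq\GL_n(K)$ forces $c_1=\dots=c_n=0$, as wanted. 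For the right-algebraic version one starts instead from $\sum_i(a-\alpha_i)^{-1}c_i=0$, multiplies on the \emph{left} by $\prod_j(a-\alpha_j)$, and reads off a nonzero polynomial over $K$ having $a$ as a \emph{left} root unless all $c_i$ vanish.

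I do not expect a serious obstacle here: this is the classical partial-fractions independence argument. The only places requiring care are (i) justifying that the denominators can be cleared at all, which is exactly where the centrality $\alpha_i\in F$ is used, and (ii) checking that sliding the central scalars $b_{ik}$ past the $c_i\in K$ keeps every coefficient inside $K$, which is where the hypothesis $F\subseteq K$ enters; after that the finish is a routine Vandermonde computation.
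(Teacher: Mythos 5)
Your proof is correct and follows essentially the same route as the paper: clear denominators by right-multiplying by $\prod_j(a-\alpha_j)$ (using centrality of the $\alpha_j$), observe that the resulting polynomial over $K$ must vanish identically since $a$ is not left algebraic, and then extract $c_i=0$ from the linear independence of the Lagrange polynomials $p_i$. The only cosmetic difference is at the last step, where the paper simply evaluates the zero polynomial at $\alpha_i$ to get $c_i p_i(\alpha_i)=0$ directly, rather than inverting the coefficient matrix $B$.
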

\begin{proof} It is enough to prove the lemma for the case of left algebraicity since the case of right algebraicity  is similar. Assume that $a$ is not left algebraic over $K$ and $$\beta_1(a-\alpha_1)^{-1}+\beta_2(a-\alpha_2)^{-1}+\cdots \beta_n(a-\alpha_n)^{-1}=0\eqno(1)$$ for some $\beta_i\in K$. Consider the polynomials $$f(t)=(t-\alpha_1)(t-\alpha_2)\cdots (t-\alpha_n)\in F[t]\subseteq K[t]$$ and $f_i(t)=f(t)/(t-\alpha_i)$ for $1\leq i\leq n$. Multiplying both sides of (1) on the right by $f(a)$, we get $\beta_1 f_1(a)+\beta_2 f_2(a)+\cdots+\beta_n f_n(a)=0$. This shows that $a$ is a right root of the polynomial $g(t)=\beta_1 f_1(t)+\beta_2 f_2(t)+\cdots+\beta_n f_n(t)$, so $g(t)\equiv 0$ because $a$ is not left algebraic over $K$. Then, for every $1\le i\le n$, we have $0=g(\alpha_i)=\beta_if_i(\alpha_i)$. Therefore, $\beta_i=0$ for all $i$. Hence, the set $\{(a-\alpha_i)^{-1}\mid i=1,2,\cdots, n\}$ is left linearly independent over $K$. 
\end{proof}
Note that the special case of Lemma \ref{l2.2} when $K=F$ was considered in \cite[Proposition 5.2.21]{Bo_Ro_91}.

\begin{theorem}\label{t2.3} Let $D$ be a division ring with uncountable center $F$, $K$ a division subring of $D$ containing $F$ and $N$ a normal subgroup of $D^*$. If $N$ is non-central, then $N$ is left algebraic (resp., right algebraic) over $K$ if and only if so is $D$.
\end{theorem}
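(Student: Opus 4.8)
The converse is immediate: a subset of a set that is left algebraic over $K$ is again left algebraic over $K$. So assume $N$ is left algebraic over $K$ and fix $a\in D$; we must show $a$ is left algebraic over $K$. If $a\in K$ (in particular if $a\in F$) this is clear, so assume $a\notin K$, hence $a\notin F$. Arguing by contradiction, suppose $a$ is not left algebraic over $K$. Then by Lemma~\ref{l2.2} the family $\{(a-\mu)^{-1}:\mu\in F\}$ is left $K$-linearly independent, hence an \emph{uncountable} left $K$-independent subset of $D$ since $F$ is uncountable; in particular ${}_KD$ has uncountable dimension. By Lemma~\ref{l2.1} we have $C_D(N)=F$, so there is $g\in N$ with $\tilde a:=g^{-1}ag\neq a$; put $\delta:=\tilde a-a\in D^*$.

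The next step is to manufacture, out of $g$ and the one–parameter family $a-\mu$, elements of $N$ that see the resolvents $(a-\mu)^{-1}$. For $\mu\in F$ set $w_\mu:=g^{-1}(a-\mu)g(a-\mu)^{-1}$. Since $N$ is normal and $a-\mu\neq 0$, the element $(a-\mu)g(a-\mu)^{-1}$ lies in $N$, so $w_\mu\in N$, and a one–line computation gives $w_\mu=(\tilde a-\mu)(a-\mu)^{-1}=1+\delta(a-\mu)^{-1}$. Because $w_\mu\in N$, it is left algebraic over $K$; and since $F\subseteq K$ is central, translation by $1\in K$ preserves left algebraicity (binomial expansion, the change of coefficients being a unitriangular operation over $\mathbb Z$), multiplication by a nonzero central scalar preserves it, and so does passing to the inverse of a left algebraic unit (the reversed polynomial still has nonzero constant term because the minimal one does). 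Hence $w_\mu-1=\delta(a-\mu)^{-1}$ and $(w_\mu-1)^{-1}=(a-\mu)\delta^{-1}=a\delta^{-1}-\mu\delta^{-1}$ are left algebraic over $K$. Thus the whole affine line $u_\mu:=a\delta^{-1}-\mu\delta^{-1}$ $(\mu\in F)$, with fixed direction $\delta^{-1}$, consists of elements left algebraic over $K$.

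The heart of the argument is to turn this into a polynomial relation for $a$ itself, and here the uncountability of $F$ is used again. Since each $u_\mu$ has a finite left degree over $K$ and $F$ is uncountable, there are a fixed $d$ and an uncountable $U\subseteq F$ with $\deg u_\mu=d$ for all $\mu\in U$. For $\mu\in U$ write the minimal left polynomial relation of $u_\mu$; the whole point of the construction is that $u_\mu=a\delta^{-1}-\mu\delta^{-1}$ is \emph{affine} in the central parameter $\mu$, so every power $u_\mu^{\,j}$ is a polynomial in $\mu$ with fixed coefficients in $D$. After clearing the (rationally varying) denominators in the coefficients of the minimal polynomials and comparing coefficients of the powers of $\mu$ via the Vandermonde determinant over $F$, one collapses the $\mu$-dependence and reads off from the extreme $\mu$-coefficients a single nonzero polynomial over $K$ satisfied by $a\delta^{-1}$, and likewise one satisfied by $\delta^{-1}$, hence by $\delta$. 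Feeding these back into the explicit identities $w_\mu=1+\delta(a-\mu)^{-1}$ one aims to produce a nonzero polynomial over $K$ vanishing at $a$, contradicting the choice of $a$ (equivalently, contradicting the left $K$-independence of the $(a-\mu)^{-1}$ given by Lemma~\ref{l2.2}).

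The step I expect to be the main obstacle is precisely this last passage. Left algebraicity over the possibly non-commutative division ring $K$ is \emph{not} stable under multiplication, so the fixed commutator factor $\delta$ cannot simply be cancelled: knowing that $a\delta^{-1}$ and $\delta$ are left algebraic does not by itself make $a=(a\delta^{-1})\delta$ left algebraic. The role of the uncountability of $F$ — through the pigeonhole on degrees and the Vandermonde collapse in the central variable $\mu$ — is exactly to supply a $\mu$-free relation in which $\delta$ has been \emph{absorbed} into finitely many coefficients from $K$ rather than cancelled, and making this rigorous (controlling the $\mu$-dependence of the minimal polynomials of the $u_\mu$, and handling the non-commutativity of $K$ when clearing denominators) is where the real work lies.
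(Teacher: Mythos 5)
Your first two paragraphs are sound and run parallel to the paper's opening moves: you use Lemma~\ref{l2.1} to find $g\in N$ not commuting with $a$, form the elements $w_\mu=g^{-1}(a-\mu)g(a-\mu)^{-1}=1+\delta(a-\mu)^{-1}$ of the normal subgroup $N$, and correctly observe that translation by $1$, central scaling and inversion preserve left algebraicity, so that $u_\mu=(a-\mu)\delta^{-1}$ is left algebraic over $K$ for every $\mu\in F$. The gap is the final passage, and it is genuine, as you yourself suspect. The ``Vandermonde collapse'' has nothing to act on: for each $\mu$ you only know that \emph{some} left polynomial over $K$ annihilates $u_\mu$, and its coefficients are arbitrary elements of $K$ depending on $\mu$, with no polynomial or rational structure in $\mu$; fixing the degree by pigeonhole does not make the coefficients comparable across different values of $\mu$, so there are no ``extreme $\mu$-coefficients'' to extract. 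Moreover, even a clean relation for $a\delta^{-1}$ (which you already have for free at $\mu=0$) would not help, since, as you correctly note, left algebraicity is not stable under right multiplication by the non-central element $\delta$.

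The paper closes exactly this gap by a different, and more elementary, use of uncountability. It arranges the construction so that the left-algebraic element has the form $(\text{fixed element})\cdot(a+\alpha)$, namely $(1+c_\alpha)^{-1}=d^{-1}b(a+\alpha)$ with $b\in N$ and $d=ba-ab$ independent of $\alpha$. Normalizing the constant term of the annihilating polynomial to $1$ and peeling off the rightmost factor $(a+\alpha)$ yields $(a+\alpha)^{-1}=-\beta_n\bigl(d^{-1}b(a+\alpha)\bigr)^{n-1}-\cdots-\beta_1 d^{-1}b$, which exhibits every $(a+\alpha)^{-1}$ as an element of a single left $K$-subspace $W$ spanned by the countably many words in $a,b,d$ (the central $\alpha$'s are absorbed into the coefficients). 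A subspace with a countable spanning set cannot contain an uncountable left $K$-independent family, so the $(a+\alpha)^{-1}$, $\alpha\in F$, are left $K$-dependent, and Lemma~\ref{l2.2} then forces $a$ to be left algebraic. Your construction could be repaired in the same spirit, but note that your element $(a-\mu)\delta^{-1}$ ends in the fixed factor $\delta^{-1}$ rather than in $(a-\mu)$, so the peeling-off trick does not solve for $(a-\mu)^{-1}$; the order of the factors matters here precisely because $K$ need not commute with $\delta$.
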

\begin{proof} We show that the theorem is true for the left case since the proof for the right case is similar. Thus, assume that $N$ is a non-central normal subgroup of $D^*$ which is left algebraic over $K$. For any $a\in D$, we have to prove that $a$ is left algebraic over $K$. If $a\in C_D(N)$, then by Lemma~\ref{l2.1}, $a\in F\subseteq K$, and there is nothing to prove. Now, assume that $a\not\in C_D(N)$. Take $b\in N$ such that $d=ba-ab\ne 0$. For every $\alpha\in F$, one has $$d=ba-ab=b(a+\alpha)-(a+\alpha)b$$$$=b(a+\alpha)(1+(a+\alpha)^{-1}b^{-1}(a+\alpha)b)=b(a+\alpha)(1+c),$$ where $c=(a+\alpha)^{-1}b^{-1}(a+\alpha)b\in N$. Since $c$ is left algebraic over $K$ and $c+1\ne 0$, the element $(c+1)^{-1}$ is left algebraic over $K$. Consequently, $d^{-1}b(a+\alpha)=(c+1)^{-1}$ is left algebraic over $K$. Hence, there exist $\beta_1,\beta_2,\cdots, \beta_n\in K$ such that $$\beta_n (d^{-1}b(a+\alpha))^n+\beta_{n-1} (d^{-1}b(a+\alpha))^{n-1}+\cdots+\beta_1 (d^{-1}b(a+\alpha))+1=0.$$ We can write this equality as follows: $$1=(-\beta_n (d^{-1}b(a+\alpha))^{n-1}-\cdots-\beta_1 (d^{-1}b))(a+\alpha).$$ 
Therefore, $(a+\alpha)^{-1}=-\beta_n (d^{-1}b(a+\alpha))^{n-1}-\cdots-\beta_1 d^{-1}b$ is in the left vector $K$-subspace $W$, which is generated by the subgroup $\langle a,b,d\rangle$  of $D^*$ generated by $a,b,d$. Since $\langle a,b,d\rangle$ is a finitely generated subgroup, the cardinality of the basis of $W$ over $K$ is countable.
Observe that $F$ is uncountable, so is the set $$\{(a+\alpha)^{-1}\mid \alpha \in F \}.$$ As a corollary, the set $\{(a+\alpha)^{-1}\mid \alpha \in F \}$ is left linearly dependent over $K$. In view of Lemma~\ref{l2.2}, $a$ is left algebraic over $K$.  
\end{proof}
The following corollary gives the affirmative answer to \cite[Problem 13]{Pa_Ma_00} in the case of uncountable center $F$ of $D$ and $N$ is normal in $D^*$. 

\begin{corollary}\label{c2.4} Let $D$ be a division ring with uncountable $F$. Assume that $N$ is a non-central normal subgroup of $D^*$. If $N$ is algebraic over $F$, then so is $D$. 
\end{corollary}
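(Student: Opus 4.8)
\textbf{Proof proposal for Corollary~\ref{c2.4}.}
The plan is simply to specialize Theorem~\ref{t2.3} to the case $K=F$. First I would observe that $F$, being the center of $D$, is in particular a division subring of $D$, and it trivially contains $F$, so the hypotheses of Theorem~\ref{t2.3} on $K$ are satisfied with $K=F$. Next I would invoke the remark made in the Introduction: when the division subring is central, i.e. $K\subseteq F$, right algebraicity over $K$ coincides with left algebraicity over $K$, and both coincide with the usual notion of algebraicity over the field $F$. In particular, for $K=F$ the three notions — ``left algebraic over $F$'', ``right algebraic over $F$'', and ``algebraic over $F$'' — are literally the same.

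With this identification in hand, the corollary becomes immediate. Suppose $N$ is a non-central normal subgroup of $D^*$ that is algebraic over $F$. Then $N$ is left algebraic over $F$, so by Theorem~\ref{t2.3} (applied with $K=F$, using that $F$ is uncountable) the whole division ring $D$ is left algebraic over $F$; equivalently, $D$ is algebraic over $F$. This is exactly the conclusion sought.

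There is essentially no obstacle here: the content of the corollary is entirely carried by Theorem~\ref{t2.3}, and the only thing to check is the bookkeeping point that algebraicity over the center is a two-sided notion, which is already recorded in the Introduction. The one place where I would be mildly careful is to make sure that ``algebraic over $F$'' as used in \cite[Problem 13]{Pa_Ma_00} means algebraic in the sense of this paper (a nonzero polynomial over $F$ with the element as a root); granting that, the deduction is as above. If desired, one could instead phrase the argument directly: for $a\notin C_D(N)=F$ (Lemma~\ref{l2.1}), pick $b\in N$ with $d=ba-ab\neq 0$, run the same computation as in Theorem~\ref{t2.3} to put all $(a+\alpha)^{-1}$, $\alpha\in F$, inside the countable-dimensional $F$-space spanned by $\langle a,b,d\rangle$, and conclude by the cardinality argument together with Lemma~\ref{l2.2}; but this merely repeats the proof of Theorem~\ref{t2.3} verbatim, so the clean statement is to cite it.
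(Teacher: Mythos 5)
Your proposal is correct and matches the paper's intent exactly: the paper states Corollary~\ref{c2.4} without proof as an immediate specialization of Theorem~\ref{t2.3} to $K=F$, using that left and right algebraicity over the central subfield $F$ coincide with ordinary algebraicity. Nothing further is needed.
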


\section{Left algebraic $n$-th derived subgroup of bounded degree}\label{s4}
Let $D$ be a division ring and $K$ a subfield of $D$. In this section, we prove that if for some integer $n\geq 1$, the $n$-th derived subgroup $D^{(n)}$ is left algebraic of bounded degree $d$ over $K$, then $\dim_FD\le d^2$
%$D$ is a finite dimensional vector space over its center $F$.

The proof of the following lemma is elementary, so we omit it. 
\begin{lemma}\label{l3.0}
	Let $D$ be a division ring, $K$ a division subring and $x$ an element in $D$. The following conditions are equivalent.
	\begin{enumerate}
		\item The element $x$ is left algebraic (resp., right algebraic) over $K$ of degree $d$. 
		\item $d$ is the largest integer such that $\{x^n\mid n=1,2,\cdots, d-1\}$ is a left (resp., right) linearly independent set in ${}_KD$ (resp., $D_K$). 
		\item $d$ is the largest integer such that the sum $\sum_{i=0}^{d-1} Kx^i$ (resp., $\sum_{i=0}^{d-1} x^iK$) is a direct sum in ${}_KD$ (resp., $D_K$). $\square$
	\end{enumerate} 
\end{lemma}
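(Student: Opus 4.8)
The plan is to deduce all three equivalences from the definition of left algebraicity together with one standard fact of linear algebra over a division ring, namely that a family of vectors in a left $K$-vector space is left linearly independent exactly when the sum of the cyclic $K$-subspaces it spans is direct. Granting that, (2) $\Leftrightarrow$ (3) is immediate, and the only real content is the passage between (1) and (2); I would carry out the left-handed case, the right-handed one being word-for-word the same after writing coefficients on the right and working in $D_K$.

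For (1) $\Leftrightarrow$ (2), recall that $x$ being left algebraic over $K$ of degree $d$ means that $d$ is the least degree of a nonzero polynomial $f(t)=\sum_i a_it^i\in K[t]$ with $f(x)=\sum_i a_ix^i=0$. A nonzero such polynomial of degree $\le m$ is literally a nontrivial left $K$-linear relation among $1,x,\dots,x^m$; conversely, given any nontrivial left $K$-linear relation $\sum_{i=0}^m a_ix^i=0$ and letting $j\le m$ be the largest index with $a_j\ne 0$, left multiplication by $a_j^{-1}$ (here one uses that $K$ is a division ring) yields $x^j\in\sum_{i<j}Kx^i$, i.e. an annihilating polynomial of degree $j$. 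Hence $1,x,\dots,x^{d-1}$ are left linearly independent over $K$ iff no nonzero polynomial of degree $\le d-1$ kills $x$, while $1,x,\dots,x^{d}$ are left linearly dependent over $K$ iff some nonzero polynomial of degree $\le d$ kills $x$. Combining the two shows that $x$ is left algebraic of degree $d$ precisely when $d$ is the largest integer for which $1,x,\dots,x^{d-1}$ is a left linearly independent family in ${}_KD$, which is (2). (If $x$ is not left algebraic, every finite set of its powers is left independent, so no such largest $d$ exists and all three conditions fail together, consistently.)

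For (2) $\Leftrightarrow$ (3) I would apply the linear-algebra fact above with the vectors $x^0,\dots,x^{d-1}$: their left linear independence in ${}_KD$ is the same as directness of $\sum_{i=0}^{d-1}Kx^i$, so ``the largest $d$ with $1,x,\dots,x^{d-1}$ left independent'' and ``the largest $d$ with $\sum_{i=0}^{d-1}Kx^i$ a direct sum'' are the same integer. I do not anticipate a genuine obstacle here; as the authors note, the lemma is elementary. The two points that deserve a word of care are the use of the invertibility of the leading coefficient (so that ``linear dependence of powers'' and ``existence of an annihilating polynomial'' match up at exactly the right degree, rather than merely up to that degree) and the degenerate non-algebraic case just mentioned.
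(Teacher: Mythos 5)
Your proposal is correct. The paper omits the proof entirely (``elementary, so we omit it''), and your argument --- matching the minimal degree of a left annihilating polynomial with the largest independent set of powers via invertibility of the leading coefficient, and identifying independence with directness of the sum of the cyclic subspaces --- is precisely the standard argument the authors intend.
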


\begin{lemma}\label{l3.1}
	Let $D$ be a division ring with infinite center, $N$ a non-central normal subgroup of $D^*$ and $K$ a subfield of $D$. If $N$ is left (or right) algebraic over $K$ of bounded degree, then $D$ is centrally finite, that is, $D$ is a finite dimensional vector space over its center.
\end{lemma}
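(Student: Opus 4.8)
The strategy is to produce a \emph{single} subfield $L$ of $D$, independent of the element considered, such that $D$ is left algebraic over $L$ of degree bounded by a function of $d$ alone, and then to appeal to the theorem from \cite{Bell} recalled in the introduction. Write $F$ for the (infinite) center of $D$, let $d$ be a common bound for the left degrees over $K$ of the elements of $N$, and let $L$ be the subfield of $D$ generated by $K\cup F$; since $F$ is central and $K$ is commutative, $L$ is a commutative subfield of $D$ with $F\subseteq L$. I treat the case in which $N$ is left algebraic over $K$; the right-algebraic case follows by the same argument applied to the opposite division ring $D^{\mathrm{op}}$ (whose center is again $F$, in which $N$ is still a non-central normal subgroup, and in which $K$ is still a subfield because $K$ is commutative).

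Fix $a\in D\setminus F$. By Lemma~\ref{l2.1}, $C_D(N)=F$, so there is $b\in N$ with $d_1:=ba-ab\neq 0$. For every $\alpha\in F$, since $\alpha$ is central we have $d_1=b(a+\alpha)-(a+\alpha)b=b(a+\alpha)(1-c_\alpha)$, where $c_\alpha:=(a+\alpha)^{-1}b^{-1}(a+\alpha)b=\bigl((a+\alpha)^{-1}b^{-1}(a+\alpha)\bigr)b\in N$ by normality of $N$, and $1-c_\alpha=(b(a+\alpha))^{-1}d_1\neq 0$. Hence $z(a+\alpha)=(1-c_\alpha)^{-1}$, where $z:=d_1^{-1}b\in D^{*}$ depends only on $a$ and $b$. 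As $c_\alpha\in N$ is left algebraic over $K$ of degree $\le d$, so is $1-c_\alpha$ (translating a polynomial by $1\in K$ preserves its degree), and hence so is its inverse $z(a+\alpha)$. Choosing a relation of minimal degree we get $\lambda_0^{(\alpha)},\dots,\lambda_d^{(\alpha)}\in K$ with $\lambda_0^{(\alpha)}\neq 0$ and $\sum_{i=0}^{d}\lambda_i^{(\alpha)}(z(a+\alpha))^{i}=0$; multiplying on the left by $(\lambda_0^{(\alpha)})^{-1}$ and using $(z(a+\alpha))^{i}=(z(a+\alpha))^{i-1}z\,(a+\alpha)$ yields
$$(a+\alpha)^{-1}=-\sum_{i=1}^{d}(\lambda_0^{(\alpha)})^{-1}\lambda_i^{(\alpha)}(z(a+\alpha))^{i-1}z .$$

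The key observation is that the right-hand side lies, for all $\alpha\in F$ at once, in one finite-dimensional left $L$-subspace of $D$. Indeed, expanding $(z(a+\alpha))^{i-1}=(za+\alpha z)^{i-1}$ and using centrality of $\alpha$, one gets $(z(a+\alpha))^{i-1}=\sum_{s=0}^{i-1}\alpha^{s}Q_{i-1,s}$, where each $Q_{i-1,s}\in D$ is a fixed element (a sum of words in $za$ and $z$) that does not depend on $\alpha$; therefore $(a+\alpha)^{-1}$ is an $L$-linear combination of the finitely many fixed elements $Q_{i-1,s}z$ with $1\le i\le d$, $0\le s\le i-1$, and hence lies in the left $L$-subspace $U_0:=\sum_{i,s}L\,Q_{i-1,s}z$, whose left $L$-dimension is at most $M:=d(d+1)/2$. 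Since $F$ is infinite, one may pick $M+1$ distinct elements among $\{(a+\alpha)^{-1}:\alpha\in F\}$, and these are left $L$-dependent; as $F\subseteq L$, Lemma~\ref{l2.2} applies and shows that $a$ is left algebraic over $L$ (inspecting its proof, of degree at most $M$). Since every element of $F\subseteq L$ is trivially left algebraic over $L$, we conclude that $D$ is left algebraic over the subfield $L$ of bounded degree $\le M$. By the theorem of \cite{Bell}, $\dim_F D\le M^{2}<\infty$, so $D$ is centrally finite.

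The only genuine point requiring care is that the elements $Q_{i-1,s}z$ are the \emph{same} for every $\alpha$, so that all the inverses $(a+\alpha)^{-1}$ are forced into one finite-dimensional space; once this is in place, the infinitude of $F$ does the rest. The reason the argument is phrased over $L=KF$ rather than over $K$ is that commuting the central scalars $\alpha^{s}$ past the $K$-coefficients only keeps things inside $K\cdot F$, and it is precisely $L$ (which contains $F$) to which Lemma~\ref{l2.2} can be applied. The remaining verifications---that left algebraicity of degree $\le d$ is preserved under $x\mapsto 1-x$ and under inversion---are routine.
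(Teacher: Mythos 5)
Your proof is correct, but it takes a genuinely different route from the paper's. The paper's argument is a two-line reduction: fixing $a\in N\setminus F$, every commutator $axa^{-1}x^{-1}=a(xa^{-1}x^{-1})$ lies in $N$ by normality, hence is left algebraic over $K$ of bounded degree, and then \cite[Theorem 11]{Pa_AaAkBi_17} is invoked as a black box to conclude that $D$ is centrally finite. You instead give a direct argument that quantifies the proof of Theorem~\ref{t2.3}: the identity $(1-c_\alpha)^{-1}=z(a+\alpha)$, the passage from a minimal-degree relation for $z(a+\alpha)$ to an expression for $(a+\alpha)^{-1}$, and the key point that expanding $(za+\alpha z)^{i-1}$ by centrality of $\alpha$ traps all the inverses $(a+\alpha)^{-1}$ in a single left $L$-subspace of dimension at most $d(d+1)/2$, where $L=KF$ --- which is exactly why the bounded degree lets you replace the uncountability of $F$ in Theorem~\ref{t2.3} by mere infiniteness. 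Lemma~\ref{l2.2} (legitimately applied over $L\supseteq F$) then makes every element of $D$ left algebraic over $L$ of degree at most $d(d+1)/2$, and \cite{Bell} finishes. All the supporting verifications check out: $L$ is indeed a commutative subfield since $K\cup F$ is a commuting set; the minimal relation for $z(a+\alpha)$ has nonzero constant term; and degree bounds are preserved under $x\mapsto 1-x$ and inversion. What each approach buys: the paper's proof is shorter but outsources all the work to \cite{Pa_AaAkBi_17}; yours is self-contained modulo the Bell--Drensky--Sharifi theorem already recalled in the introduction, proves the stronger statement that $D$ is left algebraic of bounded degree over the single subfield $KF$, and yields the explicit bound $\dim_FD\le\bigl(d(d+1)/2\bigr)^{2}$ (which is weaker than the $d^{2}$ eventually obtained in Theorem~\ref{t3.3}, but more than enough for central finiteness).
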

\begin{proof} Since $N$ is non-central, there exists $a\in N\backslash F$. For every $x\in D^*$, one has $axa^{-1}x^{-1} = a(xa^{-1}x^{-1})\in N$, so $axa^{-1}x^{-1}$ is left (resp., right) algebraic of bounded degree over $K$. By \cite[Theorem 11]{Pa_AaAkBi_17}, $D$ is centrally finite.
\end{proof}

\begin{corollary}\label{c3.2}
	Let $D$ be a division ring with infinite center. For any positive integer $n$, if the $n$-the derived subgroup $D^{(n)}$ is left (or right) algebraic of bounded degree over some subfield of $D$, then $D$ is centrally finite.
\end{corollary}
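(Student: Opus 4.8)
The plan is to obtain the corollary as a short consequence of Lemma~\ref{l3.1}. First I would record that $D^{(n)}$ is automatically a normal subgroup of $D^*$: $D^{(1)}=D'$ is characteristic in $D^*$, and inductively $D^{(i)}=[D^{(i-1)},D^{(i-1)}]$, being the commutator subgroup of a characteristic subgroup, is again characteristic; hence $D^{(n)}$ is characteristic, in particular normal, in $D^*$. Since the center $F$ is infinite by hypothesis, the only thing standing between us and a direct application of Lemma~\ref{l3.1} to $N=D^{(n)}$ is the possibility that $D^{(n)}$ is central.

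Accordingly I would split into two cases. If $D^{(n)}\not\subseteq F$, then $N=D^{(n)}$ is a non-central normal subgroup of $D^*$ which, by hypothesis, is left (or right) algebraic of bounded degree over some subfield $K$ of $D$; Lemma~\ref{l3.1} then yields at once that $D$ is centrally finite. If $D^{(n)}\subseteq F$, then $D^{(n)}$ is central, so $D^{(n+1)}=[D^{(n)},D^{(n)}]=1$ and $D^*$ is solvable; invoking the classical fact that a division ring with solvable multiplicative group is commutative (equivalently: a noncommutative division ring can have no $D^{(m)}$ contained in its center --- see the discussion of derived subgroups in \cite{Pa_AaAkBi_17}), we conclude that $D$ is a field, whence $\dim_F D=1$ and $D$ is trivially centrally finite. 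In either case $D$ is centrally finite, which is the assertion.

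The only ingredient beyond Lemma~\ref{l3.1} is thus the case $D^{(n)}\subseteq F$, i.e. the implication ``$D^*$ solvable $\Rightarrow D$ commutative'', and this is where I would expect the (small) real effort to lie. For $n=1$ it is elementary and I would spell it out: if $[x,y]\in F$ for all $x,y\in D^*$, then for a fixed $x\notin F$ the inner automorphism $z\mapsto xzx^{-1}$ carries each $y\in D^*$ to $c_y\,y$ for some $c_y\in F^{*}$, and applying the additivity of this automorphism to the pair $1$, $y$ with $y\in D\setminus F$ forces $c_y=1$ for all $y$, so $x\in F$ --- a contradiction. For larger $n$ one can start a reduction along the derived series (the last nontrivial term is abelian and normal, hence sits inside its own centralizer, which by Lemma~\ref{l2.1} is $F$ unless that term is already central), but carrying it through for derived length $\ge 3$ meets the genuinely nontrivial point that a nilpotent normal subgroup of $D^*$ must be central; so I would simply cite the classical theorem, as none of the substance of the present paper depends on reproving it.
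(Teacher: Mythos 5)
Your proposal is correct and follows essentially the same route as the paper: the paper likewise notes that if $D^{(n)}$ is central then $D^*$ is solvable and cites the classical theorem (\cite[14.4.4, Page 440]{scott}) to conclude $D$ is a field, and otherwise applies Lemma~\ref{l3.1} to the normal subgroup $D^{(n)}$. Your extra remarks (that $D^{(n)}$ is characteristic, and the elementary Hua-type argument for the $n=1$ case) are fine but not needed beyond what the paper does.
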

\begin{proof}
	If $D^{(n)}$ is central, then $D^*$ is solvable, so $D$ is a field by \cite[14.4.4, Page 440]{scott}. Hence, we can assume that $D^{(n)}$ is non-central. Since $D^{(n)}$ is normal in $D^*$, by Lemma~\ref{l3.1}, $D$ is centrally finite.
\end{proof}

The following theorem extends \cite[Theorem 1.3]{Bell} and \cite[Theorem 17]{Pa_AaAkBi_17}.
\begin{theorem}\label{t3.3}
	Let $D$ be a division ring with infinite center $F, K$ a subfield of $D$ and  $n$ be a positive integer. If the $n$-th derived subgroup $D^{(n)}$ is left (or right) algebraic of bounded degree $d$ over $K$, then $\dim_FD\le d^2$.
\end{theorem}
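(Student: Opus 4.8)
The plan is to reduce to Bell's theorem \cite[Theorem 1.3]{Bell}: that result already gives $\dim_F D\le d^{2}$ as soon as $D$ itself is left algebraic over $K$ of bounded degree $d$, and in a centrally finite division ring every element has left-algebraic degree over $K$ at most $m:=\sqrt{\dim_F D}$. So it is enough to exhibit a \emph{single} element of $D^{(n)}$ whose left-algebraic degree over $K$ equals $m$.

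First dispose of the trivial case: if $D^{(n)}$ is central then $D^{*}$ is solvable, hence $D$ is a field by \cite[14.4.4]{scott} and $\dim_F D=1\le d^{2}$; so assume $D^{(n)}$ non-central. By Corollary~\ref{c3.2}, $D$ is centrally finite; put $m=\sqrt{\dim_F D}\in\N$. Since $\dim_F D<\infty$, the subring generated by the commuting family $F\cup K$ is a finite-dimensional commutative domain, hence a subfield, over which $D^{(n)}$ is still left algebraic of degree $\le d$; so we may assume $F\subseteq K$. Then for $g\in D$ the minimal polynomial of $g$ over $F$ has degree $[F(g):F]\le m$ and coefficients in $F\subseteq K$, so the left $K$-space $\sum_{i\ge 0}Kg^{i}$ is left-spanned by $1,g,\dots,g^{[F(g):F]-1}$; by Lemma~\ref{l3.0} the left-algebraic degree of $g$ over $K$ is thus $\le[F(g):F]\le m$, with equality precisely when $F(g)$ is a maximal subfield of $D$ and $\{1,g,\dots,g^{m-1}\}$ is left $K$-linearly independent. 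It therefore suffices to find $g\in D^{(n)}$ satisfying these two conditions, for then $d\ge m$ and $\dim_F D=m^{2}\le d^{2}$.

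I would obtain such a $g$ by a density argument inside the algebraic group $D^{*}$. Since $D^{(n)}$ is a non-central normal subgroup, Lemma~\ref{l2.1} gives $C_D(D^{(n)})=F$; as $D$ is centrally finite, the $F$-subalgebra generated by $D^{(n)}$ is a finite-dimensional domain, hence a division subring, with centralizer $F$, so the double centralizer theorem forces it to equal $D$; in particular $D^{(n)}$ spans $D$ over $F$. Also $D^{(n)}\subseteq D'$ lies in the kernel $\SL_{1}(D)$ of the reduced norm; and using that the rational points of the connected group $D^{*}$ are Zariski dense in its base change $(D\otimes_F\overline{F})^{\times}\cong\GL_{m}(\overline{F})$, that $D^{(n)}$ is normal in $D^{*}$, and the classification of normal closed subgroups of $\GL_{m}$, one gets that the Zariski closure of $D^{(n)}$ is exactly $\SL_{1}(D)$. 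On the other hand, by Koethe's theorem $D$ has a separable maximal subfield; choosing a maximal subfield $L$ with $L\cap K=F$ and an element $g_{0}\in L$ of reduced norm $1$ with $F(g_{0})=L$, one checks that the set of $g\in\SL_{1}(D)$ with $F(g)$ maximal and $\{1,g,\dots,g^{m-1}\}$ left $K$-independent is a nonempty Zariski-open subset of $\SL_{1}(D)$. Being Zariski dense in $\SL_{1}(D)$, the group $D^{(n)}$ meets this set, which yields the required $g$.

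The main obstacle is this last paragraph, with two genuinely delicate points. First, that the Zariski closure of the non-central normal subgroup $D^{(n)}$ equals $\SL_{1}(D)$: here one passes to $\overline{F}$, notes the closure is normal in $\GL_{m}$ and hence, being non-central, contains $\SL_{m}$, while it is contained in $\SL_{m}$ because $D^{(n)}$ has reduced norm $1$. Second, the left $K$-independence of $1,g_{0},\dots,g_{0}^{m-1}$ for $g_{0}$ generating a maximal subfield $L$ with $L\cap K=F$: this is a linear-disjointness statement for the possibly non-commuting subfields $K$ and $L$ over $F$, which I would reduce to the classical commutative case after arranging $L/F$ separable. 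The rest — the trivial case, the passage to $F\subseteq K$, the centralizer computation, and the existence of a norm-one generator of $L$ — is routine. A possible alternative, closer in spirit to Section~\ref{s3}, is to prove directly that $D$ is left algebraic over $K$ of bounded degree $d$ by running the commutator-and-translation argument of Theorem~\ref{t2.3} with degrees tracked (the finiteness of $\dim_K D$ now playing the role of the uncountability of $F$) and then quoting \cite[Theorem 1.3]{Bell}; but keeping the degree bound under control along that route is itself the crux.
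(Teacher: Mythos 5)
Your overall target is the same as the paper's: exhibit a single element of $D^{(n)}$ whose powers $1,g,\dots,g^{m-1}$ are left $K$-independent, so that its left-algebraic degree is $m=\sqrt{\dim_FD}$ and hence $m\le d$. But the route you take to produce that element has a genuine gap exactly at the point you yourself flag as delicate, namely the non-emptiness of the ``good'' Zariski-open set. Reformulated, you need one $g_0$ of reduced norm $1$ such that $1$ is a cyclic vector for the left $K$-linear operator $\alpha\mapsto\alpha g_0$ on $D$ (equivalently, $K\cdot F(g_0)=D$). Your proposed justification --- choose a maximal subfield $L$ with $L\cap K=F$ and ``reduce to the classical commutative case'' of linear disjointness --- does not go through: $K$ and $L$ do not commute, so the multiplication map $K\otimes_FL\to D$ is only a map of $K$-$L$-bimodules, not of rings, and its kernel is an ideal of $K\otimes_FL$ that can very well be nonzero even when $K\cap L=F$ (already in the commutative theory, trivial intersection is strictly weaker than linear disjointness unless one of the extensions is Galois). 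Nothing in your argument rules out that for your chosen $L$ the element $1$ fails to be a cyclic vector, and on top of that you still need the witness to have reduced norm $1$ and to generate $L$, which is a further unverified constraint on the norm-one torus of $L$. Since the whole Zariski-density apparatus (unirationality of $\GL_1(D)$, classification of normal closed subgroups of $\GL_m(\overline F)$) only converts ``the good set is nonempty'' into ``the good set meets $D^{(n)}$'', the argument is missing its essential input.

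The paper closes precisely this hole by a conjugation/cyclic-vector argument that you do not have. It first quotes a result of Aaghabali--Bien to get $x\in D^{(n)}$ with $L=F(x)$ a maximal subfield; then, using that $D$ is a faithful module over the simple ring $D\otimes_FL$, it shows the operator $T:\alpha\mapsto\alpha x$ is non-derogatory over $K$ (its powers $T^0,\dots,T^{m-1}$ are $K$-independent in $\End_KD$), and the structure theorem for finitely generated torsion modules over the PID $K[t]$ then produces a cyclic vector $y$; finally $u=yxy^{-1}$ lies in $D^{(n)}$ by normality and satisfies the required independence because right multiplication by $y^{-1}$ is a left $K$-isomorphism. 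In other words, the correct element is not found inside a well-chosen subfield $L$ but as a conjugate of a fixed generator $x$, and it is the freedom to conjugate (guaranteed by normality of $D^{(n)}$) that replaces both of your delicate points. If you want to salvage your density approach, you would still need to prove non-emptiness, and the natural way to do that is exactly the paper's cyclic-vector argument --- at which point the density machinery becomes superfluous.
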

\begin{proof} We prove the theorem for the left case since the right case is similar. Without loss of generality, we assume that $K$ is a maximal subfield of $D$. According to Corollary~\ref{c3.2}, $\dim_FD=m^2<\infty$. We must show that $m\le d$. By \cite[Theorem 7]{Pa_AaBi_18}, there exists $x\in D^{(n)}$ such that $L=F(x)$ is a maximal subfield of $D$. It is well known that $\dim_FL=m$ (or see \cite[Proposition 15.7 and Theorem 15.8]{Bo_La_99}). One has that $D$ is a left $D\otimes _FL$-module in which the operator is defined by $(\alpha\otimes x^i) \beta=\alpha\beta x^i$ for every $\alpha,\beta\in D$ and $i\in \N$. Observe that $D\otimes _FL$ is simple, so $D$ is faithful. On the other side, $D$ may be considered as a left $K$-space. Now, consider  $T\in \End_KD$ which is defined by $T(\alpha)=\alpha x$ for every $\alpha \in D$.  We claim that the set $\{T^{i}\mid i=0,1,\cdots,m-1 \}$ is left linearly independent over $K$. Indeed, assume that $\sum\limits_{i=0}^{m-1}c_iT^i=0$ for some $c_0,c_1,\cdots,c_{m-1}\in K$. Then, for every $\alpha \in D$, $$0=\Big(\sum\limits_{i=0}^{m-1}c_iT^i\Big) (\alpha) =\sum\limits_{i=0}^{m-1}c_i\alpha x^i=\Big(\sum\limits_{i=0}^{m-1}c_i\otimes x^i\Big) \alpha.$$ Observe that $D$ is faithful, so $\sum\limits_{i=0}^{m-1}c_i\otimes x^i=0$, which implies that $$c_0=c_1=\cdots=c_{m-1}=0.$$ The claim is proved. The next claim is that there exists $y\in D$ such that $Ky+KT(y)+\cdots+KT^{m-1}(y)$ is a direct sum. Indeed,	let $t$ be an indeterminate and $K[t]$ be the polynomial ring in $t$ over $K$. Then, we can consider $D$ as left $K[t]$-module with operator defined by the rule $f(t).\alpha=f(T)(\alpha)$ for every $\alpha\in D$ and $f(t)\in K[t]$. Since $\dim_KD<\infty$, there exists a non-zero element $g(t)\in K[t]$ such that $g(T)=0$. Hence, for every $\alpha\in D$, one has $g(t)\alpha=g(T)(\alpha)=0$, so it follows that $D$ is torsion as left $K[t]$-module. Moreover, it is obvious that $D$ is finitely generated as left $K$-space, so is $D$ as a left $K[t]$-module. Therefore, $D$ is torsion finitely generated as a left module over  a PID. Hence, there exist $f_1(t),f_2(t),\cdots,f_\ell(t)\in K[t]$ such that $$\langle f_1(t)\rangle\supseteq \langle f_2(t)\rangle \supseteq \cdots \supseteq \langle f_\ell(t)\rangle$$ and an isomorphism $$\phi : K[t]/\langle f_1(t)\rangle\oplus K[t]/\langle f_2(t)\rangle\oplus \cdots \oplus K[t]/\langle f_\ell(t)\rangle \to D,$$ where $\langle f(t)\rangle$  denotes the ideal of $K[t]$ generated by some element $f(t)\in K[t]$. Put $y=\phi(1+\langle f_\ell(t)\rangle)\in D$. We will show that $y$ is the element we need to find. Indeed, assume that 	$f(t)=c_0+c_1t+\cdots+c_{m-1}t^{m-1}\in K[t]$ such that $f(T) (y)=0$. Then, $f(t)y=0$, equivalently, $f(t)\in \ann_{K[t]}y$.	Observe that
	 $\langle y\rangle\cong R/\langle f_\ell(t)\rangle$ and by direct calculation, one has $$\ann_{K[t]}(D)=\bigcap\limits_{i=1}^\ell \ann_{K[t]} K[t]/\langle f_i(t)\rangle=\bigcap\limits_{i=1}^\ell \langle f_i(t)\rangle=\ann_{K[t]}y.$$ Hence, $f(t)\in \ann_{K[t]}y=\ann_{K[t]}D$. As a corollary, $f(T)(\alpha)=0$ for every $\alpha\in D$ which contradicts to the fact that $\{T^{i}\mid i=0,1,\cdots,m-1\}$ is left linearly independent over $K$. Therefore, the claim is proved. Put $u=yxy^{-1}$. Then, $$K+Ku+\cdots+Ku^{m-1}=(Ky+Kyx+\cdots+Kyx^{m-1})y^{-1}$$ is a direct sum. By Lemma~\ref{l3.0}, it follows that $u$ is left algebraic of degree $m$ over $K$. On the other hand, since $x$ is in $D^{(n)}$ and left algebraic of bounded degree $d$ over $K$, so is $u\in D^{(n)}$. Thus, again by Lemma~\ref{l3.0}, $m\le d$. The proof is now complete.
\end{proof}

\noindent

\textbf{Acknowledgements}

\bigskip

The authors would like to express their sincere gratitude to the Editor and also to the referee for his/her careful reading and comments.

\end{document}